\newtheorem{theorem}{Theorem}[section]
\newtheorem{lemma}{Lemma}[section]
\begin{document}

\title[On a conjecture of Pomerance]
{On a conjecture of Pomerance}

\author{L. Hajdu}
\address{\newline L. Hajdu \newline University of Debrecen\\
\newline Institute of Mathematics\\
\newline and the Number Theory Research Group
of the Hungarian Academy of Sciences\\
\newline P.O. Box 12.\\
\newline H-4010 Debrecen\\
\newline Hungary} \email{hajdul@science.unideb.hu}
\thanks{Research supported in part by the OTKA grants K67580
and K75566, and by the T\'AMOP 4.2.1./B-09/1/KONV-2010-0007
project. The project is implemented through the New Hungary
Development Plan, cofinanced by the European Social Fund
and the European Regional Development Fund.}

\author{N. Saradha}
\address{\newline N. Saradha \newline School of Mathematics\\
\newline Tata Institute of Fundamental Research\\
\newline Dr. Homibhabha Road\\ \newline Colaba, Mumbai\\ \newline India}
\email{saradha@math.tifr.res.in}

\author{R. Tijdeman}
\address{\newline R. Tijdeman\newline
Mathematical Institute\newline Leiden
University\newline P.O.Box 9512\newline 2300 RA Leiden\newline The Netherlands}
\email{tijdeman@math.leidenuniv.nl}

\dedicatory{Dedicated to Professor Schinzel on the occasion of his 75th birthday}

\subjclass[2010]{Primary 11N13}
\keywords{primes in residue classes, Riemann Hypothesis}

\maketitle

\section{Introduction}

Let $k>1$ be an integer. We denote Euler's totient function by $\varphi (k)$ and the number of distinct prime divisors of $k$ by $\omega (k)$. We say that $k$ is a $P-$integer if the first $\varphi (k)$ primes coprime to $k$ form a reduced residue system modulo $k$. In 1980, Pomerance \cite{p} proved the finiteness of the set of $P-$integers. The following conjecture was proposed by him in \cite{p}.

\vskip 2mm

\noindent
{\bf Conjecture of Pomerance.} If $k$ is a $P-$integer, then $k\leq 30$.

\vskip 1mm

\noindent
This conjecture is still open. Recently, Hajdu and Saradha \cite{hs1} and Saradha \cite{s} have given simple conditions under which an integer $k$ is not a $P-$integer. By their results, it follows that

\vskip 1mm

\begin{itemize}
\item
{\it no prime is a $P-$integer except $2$;}
\item
{\it no square or a cube of a prime is a $P-$integer except $4$;}
\item
{\it no integer $k$ with its least prime divisor $>\log k$ is a $P-$integer except when $k\in\{2,4,6\}$.}
\end{itemize}

\vskip 1mm

\noindent
It is easy to check that the only $P-$integers $\leq 30$ are $2,4,6,12,18,30$. It was checked by computation in \cite{hs1} that if $k$ is another $P-$integer, then $k\geq 5.5 \cdot 10^5$. In Theorem \ref{thmcalc} we improve this bound to $10^{11}$. In this paper, we give a quantitative version of the finiteness result of Pomerance and prove the conjecture of Pomerance under the Riemann Hypothesis. We have

\begin{theorem} \label{ub}
If $k$ is a $P-$integer, then $k<10^{3500}$.
\end{theorem}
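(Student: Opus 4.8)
The plan is to trap $\varphi(k)$, and hence $k$, between an upper and a lower estimate for the largest of the first $\varphi(k)$ primes coprime to $k$. Write $N=\varphi(k)$, let $p_{1}<p_{2}<\cdots$ enumerate the primes not dividing $k$, and for $\gcd(a,k)=1$ let $p(k,a)$ denote the least prime in the residue class $a\bmod k$. The starting point is the elementary remark that if $k$ is a $P$-integer then
\[
p_{N}=\max_{\gcd(a,k)=1}p(k,a),
\]
because each reduced class contains exactly one of $p_{1},\dots,p_{N}$ and $p_{N}$ is the smallest prime in its own class. Since $k-1$ is a reduced class we have $p_{N}\ge k-1$, so every prime divisor of $k$ is $\le p_{N}$ — here the case of $k$ prime is already excluded by the results recalled in the Introduction — and hence $\pi(p_{N})=N+\omega(k)$; that is, $p_{N}$ is precisely the $(N+\omega(k))$-th prime.

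The upper estimate is then routine: explicit bounds for the $m$-th prime (Rosser--Schoenfeld, Dusart), together with $N\le k$ and $\omega(k)=O(\log k/\log\log k)$, give
\[
p_{N}<(N+\omega(k))\bigl(\log(N+\omega(k))+\log\log(N+\omega(k))\bigr)\le\varphi(k)\log\varphi(k)\,(1+o(1)).
\]
The heart of the proof is a lower estimate of strictly larger order of magnitude. Since, for a $P$-integer, the class $p_{N}\bmod k$ contains no prime below $p_{N}$, it is enough to produce one reduced class $\beta\bmod k$ all of whose terms $\beta,\beta+k,\dots,\beta+(M-1)k$ are composite for some $M$ with $(M-1)k$ exceeding the right-hand side of the last display. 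I would obtain this by the Westzynthius--Erd\H{o}s--Rankin covering method, transplanted from intervals to the progression: for each prime $q\le z$ with $q\nmid k$ one assigns a residue $m_{q}\bmod q$, chooses $\beta$ coprime to $k$ with $\beta\equiv-m_{q}k\pmod q$ for all such $q$, and arranges the $m_{q}$ so that $\{0,1,\dots,M-1\}$ is covered, so that each $\beta+tk$ is divisible by some small prime and therefore composite. This is feasible, with care, as long as $z$ is at most about $\log k$, so that $\prod_{q\le z,\,q\nmid k}q$ does not much exceed $\varphi(k)$; Rankin's bound on how long an interval such a covering can reach then yields, after all constants are made explicit,
\[
\max_{\gcd(a,k)=1}p(k,a)\;\gg\;k\log k\cdot\frac{\log_{2}k\,\log_{4}k}{(\log_{3}k)^{2}},
\]
where $\log_{j}$ denotes the $j$-fold iterated logarithm.

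Combining the equality $p_{N}=\max_{a}p(k,a)$ with the two estimates forces $\frac{\log_{2}k\,\log_{4}k}{(\log_{3}k)^{2}}\ll\varphi(k)/k\le1$, which fails once $k\ge10^{3500}$; hence no $P$-integer can be that large. The worst case is $\varphi(k)$ as large as possible relative to $k$ — which, since integers whose least prime factor exceeds $\log k$ are already excluded, still allows $\varphi(k)/k\to1$ — and this is what pins the value $10^{3500}$. I expect the main obstacle to be exactly this lower estimate. The delicate point is that the covered block $\beta,\dots,\beta+(M-1)k$ must be placed at the \emph{head} of the progression $\{\beta+mk\}_{m\ge0}$, not merely somewhere along it — otherwise one only bounds a gap between consecutive primes of the progression, not its least prime — and this forces either a reduction in the number of covering primes or an extra pigeonhole over admissible coverings, both of which cost in the constant. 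Beyond that, it is a matter of assembling explicit Mertens sums over the primes $q\nmid k$, explicit lower bounds for $\vartheta(x)$ and $\pi(x)$, and the correct dependence on $\omega(k)$ and $\varphi(k)/k$, tightly enough that the final threshold is $10^{3500}$ rather than something far larger.
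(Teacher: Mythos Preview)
Your approach is genuinely different from the paper's, and in fact is precisely the kind of argument the authors deliberately avoid. The paper's proof does not touch $\max_a p(k,a)$ or any Rankin--Jacobsthal construction. Instead it exploits the symmetry of the reduced residues about $k/2$: if $k$ is a $P$-integer then the first $\varphi(k)+\omega(k)$ primes split almost evenly between the lower and upper halves of the blocks $[nk,(n+1)k)$, which forces
\[
S_L:=\sum_{n=0}^{L}\Bigl(2\pi\bigl(nk+\tfrac{k}{2}\bigr)-\pi(nk)-\pi(nk+k)\Bigr)\le\omega(k)<\log k,
\]
where $L\asymp\log k$. Each summand with $n\ge1$ is a second difference of $\pi$, hence essentially $2\,\mathrm{Li}(nk+\tfrac{k}{2})-\mathrm{Li}(nk)-\mathrm{Li}(nk+k)\asymp k/(n\log^2 k)$ minus four PNT error terms; the $n=0$ term alone contributes $\sim (\log 2)\,k/\log^2 k$. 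Plugging in Dusart's explicit error $|\pi(x)-\mathrm{Li}(x)|\ll x(\log x)^{-3/4}\exp(-\sqrt{\log x/9.646})$ and checking numerically that the positive main terms dominate for $k\ge10^{3500}$ finishes the proof. The bound $10^{3500}$ is thus a direct reflection of the quality of the known zero-free region, not of any combinatorial construction.

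Your route can recover Pomerance's qualitative finiteness, but I do not see it yielding $10^{3500}$. The comparison you need is $c\,\dfrac{\log_2 k\,\log_4 k}{(\log_3 k)^2}>\dfrac{\varphi(k)}{k}$ for an explicit Rankin-type constant $c$. At $k=10^{3500}$ the left side is only about $1.47c$, while the right side can be as close to $1$ as you like: the exclusions you invoke (primes, prime powers, least prime factor $>\log k$) still allow $k=p_0 m$ with $p_0$ a prime just below $\log k$ and $m$ a large prime, giving $\varphi(k)/k\approx 1-1/\log k$. So you would need an explicit $c$ close to $0.7$ in the arithmetic-progression version of Rankin's bound, after paying for the ``head of the progression'' constraint and for discarding the primes $q\mid k$ from the covering set. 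Rankin's original constant is $1/3$, and even later improvements do not obviously survive these extra losses with explicit constants. The paper's second-difference criterion sidesteps all of this: the saving comes from the explicit PNT error term rather than from the extremely slowly growing Rankin factor, which is why it lands at $10^{3500}$ and improves to $10^{1000}$ or to the full conjecture as the zero-free region (or RH) improves.
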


\begin{theorem} \label{ubrh}
Suppose the Riemann Hypothesis holds. Then the only $P-$integers are $2,4,6,12,18,30$.
\end{theorem}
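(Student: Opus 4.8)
The plan is to squeeze a $P$-integer $k$ between the unconditional bound $k<10^{3500}$ of Theorem~\ref{ub} and the computational bound of Theorem~\ref{thmcalc}, and then to remove the remaining window $10^{11}\le k<10^{3500}$ by re-running the argument behind Theorem~\ref{ub} with its unconditional prime estimates replaced by the sharp conditional ones furnished by the Riemann Hypothesis.

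First I would record the rigidity of a $P$-integer $k$. Write $\varphi=\varphi(k)$, $\omega=\omega(k)$, and let $q$ be the $\varphi$-th prime coprime to $k$. Since every prime factor of $k$ is smaller than $k\le q$, the primes $\le q$ coprime to $k$ are exactly $\varphi$ in number, and, being a reduced residue system, they meet every reduced class modulo $k$ exactly once; hence $\pi(q)=\varphi+\omega$, so that $q=p_{\varphi+\omega}$ is determined by $k$, while in addition $\pi(x;k,a)\le 1$ for every reduced residue $a$ and every $x\le q$, and every reduced class contains a prime $\le q$ (so $p(k;a)\le q$, where $p(k;a)$ denotes the least prime congruent to $a$ modulo $k$). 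The proof of Theorem~\ref{ub} converts these facts into an upper bound for $k$ by confronting an upper estimate for $q=p_{\varphi+\omega}$ with a lower estimate forced by the covering and ``at most one prime per reduced class'' conditions; the enormous constant $10^{3500}$ reflects the cost of using only effective, unconditional inputs. Under the Riemann Hypothesis I would sharpen the estimate for $q$ using Schoenfeld's inequality $|\pi(x)-\mathrm{li}(x)|<\frac{1}{8\pi}\sqrt{x}\,\log x$ and the consequent localisation of $p_{\varphi+\omega}$ at $\mathrm{li}^{-1}(\varphi+\omega)$ with a negligible error throughout $k<10^{3500}$, and sharpen the opposing estimate using the corresponding conditional bounds for $\pi(x;k,a)$ together with explicit Brun--Titchmarsh inequalities. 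One then runs a short case analysis on the shape of $k$: if $\omega(k)$ is small then $\varphi(k)$ is of order $k$, whereas if $\omega(k)$ is large then $k$ exceeds the primorial of its prime divisors and hence $\omega(k)\ll\log k/\log\log k$; in each regime one verifies that the two estimates are already incompatible at $k=10^{11}$. Finally, for $k\le 10^{11}$ Theorem~\ref{thmcalc} yields $k\in\{2,4,6,12,18,30\}$.

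I expect the genuinely hard part to be the interplay of the two estimates rather than any one of them in isolation. The conditional bound for $\pi(x;k,a)$ has main term $\mathrm{li}(x)/\varphi(k)$ and error term $O(\sqrt{x}\,(\log x)^2)$, and at the only scale relevant here, $x\asymp q\asymp\varphi(k)\log\varphi(k)\le k\log k$, the error swamps the main term; one therefore cannot simply invoke equidistribution in a single residue class, and the naive ways of producing a contradiction — finding a prime pair differing by a multiple of $k$ among the primes $\le q$, or bounding $\max_{\gcd(a,k)=1}p(k;a)$ from below past $q$ — are not obviously available even on the Riemann Hypothesis. The crux is to isolate the precise combinatorial inequality that the proof of Theorem~\ref{ub} extracts from the covering and non-collision conditions and then to check that each of its unconditional ingredients improves quantitatively under RH by just enough to push the resulting threshold from $10^{3500}$ down to at most $10^{11}$; a single mishandled logarithmic factor would leave a window such as $10^{11}<k<10^{20}$ uncovered, and such a window cannot be cleared computationally.
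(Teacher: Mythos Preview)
Your proposal is not a proof but a plan that explicitly defers the one step that matters. You say ``the crux is to isolate the precise combinatorial inequality that the proof of Theorem~\ref{ub} extracts from the covering and non-collision conditions'' --- but that inequality \emph{is} the argument, and you do not supply it. Moreover, the direction you do sketch, via conditional bounds for $\pi(x;k,a)$ and Brun--Titchmarsh, is the wrong one: as you yourself note, at the only relevant scale $x\asymp k\log k$ the RH error term $O(\sqrt{x}\,(\log x)^2)$ in $\pi(x;k,a)$ dominates the main term $\mathrm{Li}(x)/\varphi(k)$, so no amount of case analysis on $\omega(k)$ will close the gap down to $10^{11}$.

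The paper sidesteps this obstruction entirely by never looking at individual residue classes. The key observation (Lemma~\ref{criterion}) is that the reduced residues modulo $k$ are symmetric about $k/2$, so if $k$ is a $P$-integer then the primes $p_i\le p_T$ split essentially evenly between the intervals $[nk,nk+k/2)$ and $[nk+k/2,(n+1)k)$; in particular
\[
S_L:=\sum_{n=0}^{L}\Bigl(2\pi\bigl(nk+\tfrac{k}{2}\bigr)-\pi(nk)-\pi(nk+k)\Bigr)\le \omega(k)<\log k.
\]
Each summand is a \emph{second difference} of $\pi$, which under RH (Schoenfeld's bound on $|\pi(x)-\mathrm{Li}(x)|$, not on $\pi(x;k,a)$) has main term $\asymp k/(n\log^2(nk))$ coming from the convexity of $\mathrm{Li}$ and error $O(\sqrt{nk}\,\log(nk))$. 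Since $n\le L<\log(k\log k)$, the main term wins already at $k\ge 3\cdot 10^{13}$; a short explicit computation then covers $10^{11}\le k<3\cdot 10^{13}$, and Theorem~\ref{thmcalc} finishes $k\le 10^{11}$. The point is that passing to second differences of $\pi(x)$ trades the hopeless $\sqrt{x}$-versus-$x/\varphi(k)$ comparison for a winnable $\sqrt{x}$-versus-$k/\log^2 x$ one; your outline never reaches this reduction.
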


Theorem \ref{ub} depends on results about the zeros of
the Riemann zeta function. Our method of proof differs from the methods used in \cite{hs1}, \cite{p} and \cite{s}. Our arguments are based on estimates for the number of primes in intervals. We do not use the Jacosthal function and its properties as done in the papers mentioned above.

\section{Lemmas}

Let $p_1<p_2<\dots$ be the increasing sequence of prime numbers. For any $x>1,$ let $\pi(x)$ denote the number of prime numbers not exceeding  $x$, and ${\rm Li}(x) = \lim_{x \to \infty} \int_{t=0}^{1- \epsilon} \frac{dt} {\log t} +
\int_{t=1 + \epsilon}^{x} \frac{dt} {\log t}$. We put $\pi(x)=0$ for $0 \leq x \leq 1$.

\begin{lemma}
\label{pnt} For any $x\in {\mathbb R}$ and $n\in {\mathbb N}$ we have

\noindent
{\rm (i)} $\pi(x) > \frac{x}{\log x}+\frac{x}{\log^2 x}+
\frac {2x}{\log^3 x}$ for $x>88783;$

\noindent
{\rm (ii)} $\pi(x)<\frac{x}{\log x}+\frac{x}{\log^2 x}+
\frac {2.51x} {\log^3 x}$ for $x>355991;$

\noindent
{\rm (iii)} $|\pi(x) - {\rm Li}(x)|< .4394 \frac {x}{(\log
x)^{3/4}} \exp \left(-\sqrt \frac {\log x} {9.646} \right)$ for $x\geq 58;$

\noindent
{\rm (iv)} if the Riemann Hypothesis holds, then
$|\pi(x)-{\rm Li} (x)| < \frac {1} {8 \pi} \sqrt{x} \log x$ for $x>2656;$

\noindent
{\rm (v)} ${\rm Li} (x)> \pi (x)$ for $x \leq 10^{14};$

\noindent
{\rm (vi)} $p_n<n(\log n + \log \log n)$ for $n \geq 6;$

\noindent
{\rm (vii)} $p_n>n\log n$ for $n\geq 1;$

\noindent
{\rm (viii)} $\frac {n} {\varphi(n)} <1.7811\log\log n+\frac {2.51} {\log\log n}$ for $n\geq 3$.

\end{lemma}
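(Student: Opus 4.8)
The plan is to assemble all eight estimates from the existing literature on explicit bounds for prime-counting functions; none of them is new, and the work consists in locating, for each item, a reference carrying exactly the constants and ranges stated here, and reconciling the shape of the published bound with the shape we use. For (i) and (ii) I would invoke Dusart's explicit estimates for $\pi(x)$, which have precisely the form $\frac{x}{\log x}\bigl(1+\frac{1}{\log x}+\frac{a}{\log^2 x}\bigr)$ with admissible constants on the relevant ranges; one checks that $a=2$ on the lower side and $a=2.51$ on the upper side are valid once $x$ exceeds the thresholds $88783$ and $355991$ respectively, verifying the small residual range near the threshold by direct computation if the published threshold is slightly larger. For (iii) I would use the explicit prime number theorem with a de la Vall\'ee Poussin / Korobov--Vinogradov zero-free region in the form due to Dusart (or Ford), where the numbers $.4394$ and $9.646$ arise; for $58\le x$ below the analytic threshold the inequality is checked numerically. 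For (iv), the conditional bound, I would quote Schoenfeld's classical estimate $|\pi(x)-{\rm Li}(x)|<\frac{1}{8\pi}\sqrt{x}\log x$, valid under the Riemann Hypothesis for $x$ at least roughly $2657$, and close the tiny gap down to $2656$ by a direct check.

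For (v), the statement ${\rm Li}(x)>\pi(x)$ for $x\le 10^{14}$ follows from the large-scale numerical verifications surrounding the Skewes problem, which push the first sign change of $\pi(x)-{\rm Li}(x)$ far beyond $10^{14}$ (work of Bays--Hudson, B\"uthe, Chao--Plymen, and others), combined with the elementary fact that ${\rm Li}$ dominates on an initial segment. For (vi) and (vii) I would cite the standard explicit bounds on the $n$-th prime: $p_n>n\log n$ for all $n\ge 1$ from Rosser--Schoenfeld, and $p_n<n(\log n+\log\log n)$ for $n\ge 6$ from the refinements of Rosser--Schoenfeld due to Dusart, with the small values $n=6,7,\dots$ checked against a prime table to pin down the exact threshold. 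For (viii) I would use the Rosser--Schoenfeld inequality $\frac{n}{\varphi(n)}<e^{\gamma}\log\log n+\frac{c}{\log\log n}$, using $e^{\gamma}=1.78107\ldots<1.7811$ and the admissible value $c=2.51$, valid for $n\ge 3$ after disposing by hand of the handful of small $n$ (namely $n=3,4,5,6$) where $\log\log n$ is close to $0$ or the crude bound is delicate.

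The obstacle is not depth but bookkeeping. The relevant results in the literature are frequently stated for $\vartheta(x)$ or $\psi(x)$ rather than $\pi(x)$, or with slightly different constants, or with thresholds that differ by a small amount from the ones we want; so each of the eight items needs care to extract the clean form above, and for several of them (the lower endpoints in (iii), (iv), (vi) and (viii), in particular) one must supplement the asymptotic/analytic estimate with a short explicit numerical verification near the bottom of the stated range in order to bridge the literature's threshold and ours. Once these checks are carried out, the lemma is immediate.
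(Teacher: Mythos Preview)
Your proposal is correct and follows essentially the same approach as the paper: both treat the lemma as a compilation of known explicit estimates and simply point to the literature. The paper is in fact more terse than you anticipate---it just lists page references (Dusart for (i)--(iii), Schoenfeld for (iv), Kotnik for (v), Rosser--Schoenfeld for (vi)--(viii)) without any supplementary numerical bridging, since those sources already carry the stated constants and thresholds verbatim; so the bookkeeping worries you raise about reconciling thresholds are largely unnecessary here.
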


\begin{proof} We mention the references where the estimates from Prime Number Theory given in the lemma can be found. \\
(i) Dusart \cite{d3}, p. 2.\\
(ii) Dusart \cite{d1}, p. 40. \\
(iii) Dusart \cite{d1}, p. 41. \\
(iv) Schoenfeld \cite{sc}, p. 339. \\
(v) Kotnik \cite{kot}, p. 59.\\
(vi) Rosser and Schoenfeld \cite{rs}, p. 69. \\
(vii) Rosser and Schoenfeld \cite{rs}, p. 69. \\
(viii) Rosser and Schoenfeld \cite{rs}, p. 72.
\end{proof}

\begin{lemma}
\label{step1} Let $x$ be a real number with $x>712000$. Then we have
$$
2\pi\left(\frac x2\right)-\pi(x)>\frac{.693x}{\log^2 x}.
$$
\end{lemma}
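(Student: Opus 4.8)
The plan is to bound $2\pi(x/2)-\pi(x)$ from below by applying the explicit asymptotic expansions for $\pi$ from Lemma~\ref{pnt}. Concretely, I would use the lower bound (i) for $\pi(x/2)$ and the upper bound (ii) for $\pi(x)$. To be able to invoke (i) at the argument $x/2$ I need $x/2>88783$, i.e. $x>177566$, and to invoke (ii) I need $x>355991$; both are comfortably implied by the hypothesis $x>712000$. Writing $y=x/2$ and $L=\log x$, $\ell=\log(x/2)=L-\log 2$, I would get
\[
2\pi\!\left(\tfrac x2\right)-\pi(x)>2\left(\frac{y}{\ell}+\frac{y}{\ell^2}+\frac{2y}{\ell^3}\right)-\left(\frac{x}{L}+\frac{x}{L^2}+\frac{2.51x}{L^3}\right)
= x\left(\frac{1}{\ell}-\frac{1}{L}\right)+x\left(\frac{1}{\ell^2}-\frac{1}{L^2}\right)+x\left(\frac{2}{\ell^3}-\frac{2.51}{L^3}\right).
\]
The first main term is the decisive one: since $\ell=L-\log 2$, one has $\frac1\ell-\frac1L=\frac{\log 2}{\ell L}$, so the leading contribution is about $\frac{(\log 2)x}{\log^2 x}$, and $\log 2\approx 0.6931$ already exceeds the target constant $0.693$. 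The remaining task is to show that the second and third bracketed terms, together with the discrepancy between $\frac{\log 2}{\ell L}$ and $\frac{\log 2}{L^2}$, do not eat away more than a negligible amount for $x>712000$.

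The second step is therefore the error analysis. The term $x\bigl(\frac{1}{\ell^2}-\frac{1}{L^2}\bigr)$ is positive, so it only helps. The term $x\bigl(\frac{2}{\ell^3}-\frac{2.51}{L^3}\bigr)$ is also positive whenever $2L^3>2.51\,\ell^3$, i.e. whenever $(L/\ell)^3>1.255$, which certainly holds for all relevant $x$ since $L/\ell>1$; so this term helps too. Thus
\[
2\pi\!\left(\tfrac x2\right)-\pi(x)>x\left(\frac1\ell-\frac1L\right)=\frac{(\log 2)\,x}{\ell L}=\frac{(\log 2)\,x}{(L-\log 2)L}.
\]
It remains to check that $\frac{\log 2}{(L-\log 2)L}\ge \frac{0.693}{L^2}$, equivalently $(\log 2)L^2\ge 0.693\,L(L-\log 2)$, i.e. $(\log 2-0.693)L\ge -0.693\log 2$. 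Since $\log 2=0.693147\ldots>0.693$, the left side is positive, so the inequality holds for every $x>1$, and a fortiori for $x>712000$. (If one prefers not to lean on the fourth decimal of $\log 2$, one can instead keep the genuinely positive contribution of the $1/\ell^2-1/L^2$ term: that term equals $\frac{x(\log 2)(L+\ell)}{\ell^2L^2}$, which is of order $x/\log^3 x$ and more than compensates any slack; then the threshold $x>712000$ is used to make the $1/\log^3 x$ terms small relative to the $1/\log^2 x$ main term.)

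The only real obstacle is the one just flagged: the target constant $0.693$ is so close to $\log 2$ that the argument is tight, and one must be slightly careful about the direction of each error term rather than crudely bounding $|{\cdot}|$. Once it is observed that every correction term ($1/\ell^2-1/L^2$, $2/\ell^3-2.51/L^3$, and the replacement of $1/(\ell L)$ by $1/L^2$) has the favourable sign for $x$ in the stated range, the proof reduces to the elementary inequality $\log 2>0.693$. I would finish by remarking that the threshold $x>712000$ is dictated by the requirement $x/2>88783$ in part (i) of Lemma~\ref{pnt}, not by the size of the error terms, so there is no difficulty in meeting it.
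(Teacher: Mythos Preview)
Your approach is exactly the one the paper uses: apply Lemma~\ref{pnt}(i) at $x/2$ and (ii) at $x$, then extract the leading $\frac{(\log 2)x}{\log^2 x}$ term. However, your sign analysis of the cubic term contains an error that breaks the argument as written.

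You assert that $x\bigl(\tfrac{2}{\ell^3}-\tfrac{2.51}{L^3}\bigr)>0$ ``whenever $(L/\ell)^3>1.255$, which certainly holds for all relevant $x$ since $L/\ell>1$.'' But $L/\ell>1$ does not imply $(L/\ell)^3>1.255$; in fact $L/\ell=\bigl(1-\tfrac{\log 2}{L}\bigr)^{-1}\to 1$ as $x\to\infty$, and already at $x=712000$ one has $L\approx 13.48$, $\ell\approx 12.78$, so $(L/\ell)^3\approx 1.17<1.255$. Thus the cubic term is \emph{negative} throughout the range $x>712000$, and you cannot simply discard it. The paper handles this correctly: it uses $\ell<L$ to get $\tfrac{2}{\ell^3}-\tfrac{2.51}{L^3}>-\tfrac{0.51}{L^3}$, and then observes that the quadratic surplus satisfies $\tfrac{1}{\ell^2}-\tfrac{1}{L^2}>\tfrac{2\log 2}{L^3}$, so that the combined lower-order contribution is $>\tfrac{(2\log 2-0.51)x}{L^3}>0$. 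With that correction your argument goes through and matches the paper's.

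A minor side remark: your closing comment that the threshold $712000$ is ``dictated by the requirement $x/2>88783$'' is not quite right either, since that only forces $x>177566$; the hypotheses of (i) and (ii) together need only $x>355991$.
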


\begin{proof}
We have, by Lemma \ref{pnt}, for $x>712000$,
$$
2\pi(x/2)-\pi(x)>
$$
$$
\frac{x}{\log(x/2)}+\frac{x}{\log^2 (x/2)}+ \frac {2x} {\log^3 (x/2)}-\frac{x}{\log x}-\frac{x}{\log^2 x}-\frac {2.51x} {\log^3 x}>
$$
$$
\frac{x}{\log x \left( 1 - \frac {\log 2} {\log x} \right) } -\frac{x}{\log x} + \frac{x}{\log^2 x  \left( 1 - \frac {\log 2} {\log x} \right)^2 } -\frac{x}{\log^2 x} -\frac {.51x}{\log^3 x}>
$$
$$
\frac {x} {\log x} \cdot \frac {\log 2}{\log x} + \frac {x} {\log^2 x}  \cdot \frac {2\log 2} {\log x} - \frac {.51x}{\log^3 x} > \frac {.693 x}{\log^2 x} .
$$
\end{proof}

\begin{lemma}
\label{gen}
Let $x$ and $y$ be positive real numbers with $x>y$, $x\geq 59$. Then
$$
2\pi (x+y) - \pi (x) - \pi (x+2y)>
$$
$$
\frac {y^2} {(x+2y) \log^2(x+2y)} -  \frac {1.7576(x+2y)} {( \log x)^{3/4}} e^{-\sqrt \frac {\log x} {9.646}}.
$$
\end{lemma}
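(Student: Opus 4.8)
The plan is to express the left-hand side $2\pi(x+y)-\pi(x)-\pi(x+2y)$ in terms of the logarithmic integral $\mathrm{Li}$ plus error terms, using Lemma \ref{pnt}(iii). Write
\[
2\pi(x+y)-\pi(x)-\pi(x+2y) = \bigl(2\,\mathrm{Li}(x+y)-\mathrm{Li}(x)-\mathrm{Li}(x+2y)\bigr) + E,
\]
where $E$ collects the three error terms $\pi-\mathrm{Li}$ evaluated at $x$, $x+y$, $x+2y$. The main term is a second difference of the convex function $\mathrm{Li}$, so it should be nonnegative; in fact I would bound it below by something of size roughly $y^2/((x+2y)\log^2(x+2y))$, and the error $E$ by the sum of three copies of the bound in Lemma \ref{pnt}(iii), which since the argument is at most $x+2y$ and the function $t\mapsto t(\log t)^{-3/4}\exp(-\sqrt{\log t/9.646})$ is increasing, is at most $3\times .4394\,(x+2y)(\log(x+2y))^{-3/4}\exp(-\sqrt{\log(x+2y)/9.646})$. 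Here I would need to replace $\log(x+2y)$ by $\log x$ in the exponential to match the stated form; since $x+2y < 3x$ this loses only a bounded factor, and the constant $3\times.4394 = 1.3182$ must be inflated to $1.7576$ to absorb it — so some care with exactly how much slack is available is required.

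For the main term, the cleanest route is the integral representation: since $\mathrm{Li}$ is twice differentiable on $(1,\infty)$ with $\mathrm{Li}''(t) = -1/(t\log^2 t)<0$, one has by Taylor's theorem with integral remainder (applied around $x+y$, or directly)
\[
2\,\mathrm{Li}(x+y)-\mathrm{Li}(x)-\mathrm{Li}(x+2y) = \int_0^y \bigl(\mathrm{Li}'(x+y-s)+\mathrm{Li}'(x+y+s)-2\,\mathrm{Li}'(x+y)\bigr)\,ds,
\]
or more simply $= -\int_0^y\!\!\int_{-s}^{s}\mathrm{Li}''(x+y+u)\,du\,ds = \int_0^y\!\!\int_{-s}^s \frac{du\,ds}{(x+y+u)\log^2(x+y+u)}$. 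On the region of integration the integrand is at least $1/((x+2y)\log^2(x+2y))$, and $\int_0^y\!\!\int_{-s}^s du\,ds = y^2$, giving exactly the claimed main term $y^2/((x+2y)\log^2(x+2y))$. (One small point: $x\ge 59$ and $x>y$ guarantee $x+y+u > x-y > 0$ stays well inside the domain of $\mathrm{Li}$, so the integrals are legitimate.)

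Putting the two estimates together yields
\[
2\pi(x+y)-\pi(x)-\pi(x+2y) > \frac{y^2}{(x+2y)\log^2(x+2y)} - \frac{1.7576(x+2y)}{(\log x)^{3/4}}e^{-\sqrt{\log x/9.646}},
\]
which is the assertion. I expect the only real obstacle to be the bookkeeping in the error term: one must check that Lemma \ref{pnt}(iii) is applicable at all three points (it is, since $x\ge 59$), that monotonicity of the relevant function justifies evaluating the bound at the largest argument $x+2y$, and that the numerical constant $1.7576$ is genuinely large enough to cover $3\times.4394$ together with the distortion from replacing the argument $x+2y$ by $x$ inside $(\log\cdot)^{3/4}$ and the exponential. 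Everything else is a routine convexity computation.
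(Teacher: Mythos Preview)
Your approach is the same as the paper's: decompose via $\mathrm{Li}$ using Lemma~\ref{pnt}(iii), bound the second difference of $\mathrm{Li}$ below by convexity, and bound the accumulated $\pi-\mathrm{Li}$ errors above. Two points need fixing.

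\textbf{The error-term count.} You write ``three error terms'', but there are four: the coefficient $2$ on $\pi(x+y)$ means $|E|\le 2|e(x+y)|+|e(x)|+|e(x+2y)|$. Thus the constant is $4\times 0.4394=1.7576$ exactly, not $3\times 0.4394=1.3182$. This also dissolves your worry about ``distortion'': for each of the four terms, bound the linear factor $t$ above by $x+2y$ and, since $(\log t)^{-3/4}$ and $\exp(-\sqrt{\log t/9.646})$ are both decreasing, bound those factors by their values at $t=x$. Each error term is then at most $0.4394\,(x+2y)(\log x)^{-3/4}e^{-\sqrt{\log x/9.646}}$, and four of them give the stated constant with no slack left over and none needed. (Lemma~\ref{pnt}(iii) applies at all three points since $x\ge 59>58$.)

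\textbf{The main term.} Your double-integral formula
\[
2\,\mathrm{Li}(x+y)-\mathrm{Li}(x)-\mathrm{Li}(x+2y)=\int_0^y\!\!\int_{-s}^{s}\frac{du\,ds}{(x+y+u)\log^2(x+y+u)}
\]
is correct and yields the bound $y^2/((x+2y)\log^2(x+2y))$ as you say; the preceding one-line ``Taylor'' display with the term $-2\,\mathrm{Li}'(x+y)$ is garbled and should be dropped. The paper does this step slightly differently, writing $\int_x^{x+y}\bigl(1/\log t-1/\log(t+y)\bigr)\,dt$ and applying the mean value theorem twice to get $y^2/(\xi\log^2\xi)$ for some $\xi\in(x,x+2y)$, then taking $\xi=x+2y$. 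Both routes give the same lower bound.
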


\begin{proof}
By Lemma \ref{pnt} (iii),
$$
2 \pi (x+y) - \pi (x) - \pi (x+2y) >
$$
$$
2 {\rm Li} (x+y) - {\rm Li} (x) - {\rm Li} (x+2y) -1.7576 \frac {x+2y}{(\log x)^{3/4}} \exp \left(- \sqrt \frac {\log x} {9.646} \right).
$$
Observe that
$$
2 {\rm Li} (x+y) - {\rm Li} (x) - {\rm Li} (x+2y) = \int_x^{x+y}  \frac {dt} {\log t} - \int_{x+y}^{x+2y}  \frac {dt} {\log t}
$$
$$
= \int_x^ {x+y} dt \left( \frac{1}{ \log t} - \frac {1} { \log (t+y)} \right) = \frac {y^2} {\xi \log^2 \xi}
$$
for some $\xi$ with $x < \xi < x+2y $, by the mean value theorem applied twice. Thus
$$
2 \pi (x+y) - \pi (x) - \pi (x+2y) >
$$
$$
\frac {y^2} {(x+2y) \log^2(x+2y)} - 1.7576 \frac {x+2y}{(\log x)^{3/4}} \exp \left(- \sqrt \frac {\log x}{9.646} \right).
$$
\end{proof}

\begin{lemma} \label{schoenfeld}
Suppose the Riemann Hypothesis holds true.\\
Let $x>y>0$, $x \geq 2657$. Then
$$
2 \pi (x+y) - \pi (x) - \pi (x+2y) >
$$
$$
\frac {y^2} {(x+2y) \log^2(x+2y)} - \frac {\log(x+2y)} {\theta} \sqrt{x+2y}
$$
where
$$
\theta=
\begin{cases}
2\pi \ if \ x+2y>10^{14}\\
4\pi \ if \ x+2y \leq 10^{14}.
\end{cases}
$$
\end{lemma}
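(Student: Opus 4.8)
The plan is to imitate the proof of Lemma \ref{gen}, replacing the unconditional estimate of Lemma \ref{pnt}(iii) by the conditional estimate of Lemma \ref{pnt}(iv), and to split the argument into the two ranges $x+2y>10^{14}$ and $x+2y\leq 10^{14}$ according to the definition of $\theta$.

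First I would treat the main term exactly as in Lemma \ref{gen}. Applying the mean value theorem twice gives $2\,{\rm Li}(x+y)-{\rm Li}(x)-{\rm Li}(x+2y)=\frac{y^2}{\xi\log^2\xi}$ for some $\xi$ with $x<\xi<x+2y$, and since $t\mapsto t\log^2 t$ is increasing this is $>\frac{y^2}{(x+2y)\log^2(x+2y)}$. This part of the argument is identical to the one already carried out, and does not use the Riemann Hypothesis.

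Next I would bound the error. Since $x\geq 2657$, each of $x$, $x+y$, $x+2y$ exceeds $2656$, so Lemma \ref{pnt}(iv) applies to all three. Lower bounding $\pi(x+y)$ and upper bounding $\pi(x)$ and $\pi(x+2y)$ by their ${\rm Li}$-values, with error at most $\frac{1}{8\pi}\sqrt{t}\log t$ in each case, and using that $t\mapsto\sqrt{t}\log t$ is increasing, the total error is at most $\frac{1}{8\pi}\bigl(2\sqrt{x+y}\log(x+y)+\sqrt{x}\log x+\sqrt{x+2y}\log(x+2y)\bigr)\leq\frac{4}{8\pi}\sqrt{x+2y}\log(x+2y)=\frac{\log(x+2y)}{2\pi}\sqrt{x+2y}$. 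Together with the main term bound this settles the case $x+2y>10^{14}$, where $\theta=2\pi$. For the case $x+2y\leq 10^{14}$ I would invoke Lemma \ref{pnt}(v): since then $x\leq 10^{14}$ and $x+2y\leq 10^{14}$, we get $\pi(x)<{\rm Li}(x)$ and $\pi(x+2y)<{\rm Li}(x+2y)$ with \emph{no} error term, so only the lower bound $\pi(x+y)>{\rm Li}(x+y)-\frac{1}{8\pi}\sqrt{x+y}\log(x+y)$ contributes. Hence the total error is only $\frac{2}{8\pi}\sqrt{x+y}\log(x+y)=\frac{\log(x+y)}{4\pi}\sqrt{x+y}\leq\frac{\log(x+2y)}{4\pi}\sqrt{x+2y}$, which is exactly what is needed for $\theta=4\pi$.

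Combining the main term lower bound with the error bound in each of the two ranges yields the lemma. I do not expect a genuine obstacle here; the only point that requires attention is recognizing that the improved constant $\theta=4\pi$ in the lower range comes precisely from using Lemma \ref{pnt}(v) to eliminate the error terms attached to $\pi(x)$ and $\pi(x+2y)$, rather than from any sharper analytic input.
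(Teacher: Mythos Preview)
Your proposal is correct and follows exactly the approach of the paper: the main term is handled by the mean value argument from Lemma~\ref{gen}, and the error term comes from Lemma~\ref{pnt}(iv), with the refinement that when $x+2y\le 10^{14}$ Lemma~\ref{pnt}(v) eliminates the error contributions from $\pi(x)$ and $\pi(x+2y)$, yielding $\theta=4\pi$ instead of $2\pi$. The paper's proof is terser, but the content is the same.
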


\begin{proof}
By Lemma \ref {pnt} (iv) and (v),
$$
2\pi (x+y) - \pi (x) - \pi (x+2y) >
$$
$$
2 {\rm Li} (x+y) - {\rm Li} (x) - {\rm Li} (x+2y) - \frac {\log (x+2y)} {\theta}\sqrt{x+2y}.
$$
The lemma follows in the same way as in the proof of Lemma \ref{gen}.
\end{proof}

\section{A criterion for an integer $k$ to be not a $P-$integer}

Suppose $k$ is a $P-$integer $>30$. Let $\varphi(k)+\omega(k)=T$. Then there are exactly $\varphi(k)$ primes belonging to the set $\{p_1,\cdots, p_T\}$ which are coprime to $k$ and form a reduced residue system mod $k$. The remaining $\omega(k)$ primes in this set divide $k$. Let
$$
D_k'=\left\{ i \leq T: p_i\ ({\rm mod}\ k) <\frac{k}{2}\right\},
$$
$$
D_k''=\left\{ i \leq T: p_i\ ({\rm mod}\ k) \geq\frac{k}{2}\right\}
$$
and
$$
D_k'''=\left\{ i \leq T: p_i|k\right\}.
$$
Note that $|D_k'''|=\omega(k)$ where $|A|$ denotes the number of elements of a set $A$. By the symmetry of the residues about $k/2$, we get
$$
|D_k'\setminus D_k'''|=|D_k''\setminus D_k'''|
$$
which implies
\begin{equation}\label{eqn0}
|D_k'|-|D_k''|\leq |D_k'''|=\omega(k).
\end{equation}
Let $t$ be an integer such that $tk <p_T<(t+1)k$. We observe that if $p_T\in (tk,tk+\frac{k}{2})$ we have
$$
|D_k'|= \sum_{n=0}^{t-1}
\left(\pi\left(nk+\frac{k}{2}\right)-\pi(nk)\right)+T-\pi(tk),
$$
$$
|D_k''|= \sum_{n=0}^{t-1}
\left(\pi(nk+k)-\pi\left(nk+\frac{k}{2}\right)\right)
$$
and if $p_T\in \left(tk+\frac{k}{2},tk+k\right)$, then
$$
|D_k'|= \sum_{n=0}^{t}\left( \pi\left(nk+\frac{k}{2}\right)-\pi(nk)\right),
$$
$$
|D_k''|= \sum_{n=0}^{t-1}
\left(\pi(nk+k)-\pi\left(nk+\frac{k}{2}\right)\right)+
T-\pi\left(tk+\frac{k}{2}\right).
$$
Thus we get
$$
|D_k'|-|D_k''|=\sum_{n=0}^{t-1}\left(2\pi\left(nk+\frac{k}{2} \right)-\pi(nk)-\pi(nk+k)\right)+T-\pi(tk)
$$
in the former case, and in the latter case
$$
|D_k'|-|D_k''|=\sum_{n=0}^{t}
\left(2\pi\left(nk+\frac{k}{2}\right)-\pi(nk)
-\pi(nk+k)\right)+\pi(tk+k)-T.
$$
Let $L(k) = t-1$ in the former case and $L(k)=t$ in the latter. Let $L:=L(k)$. We shall use this parameter $L$ later on without any further mentioning. Noting that $T-\pi(tk)$ and $\pi(tk+k)-T$ are both non-negative and that $\omega(k)<\log k$, we find by \eqref{eqn0} the following criterion.

\begin{lemma}\label{criterion}
The integer $k$ is {\it not} a $P-$integer, if
$$
S_L:=\sum_{n=0}^{L}
\left(2\pi\left(nk+\frac{k}{2}\right)-\pi(nk) -\pi(nk+k)\right)>\log k.
$$
\end{lemma}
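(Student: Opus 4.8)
The plan is to deduce the criterion by contradiction, essentially assembling the ingredients prepared just above the statement. Assume $k$ is a $P$-integer (with $k>30$, as in the running hypothesis of this section) and, for contradiction, that $S_L>\log k$. Recall that with $T=\varphi(k)+\omega(k)$ and $t$ fixed by $tk<p_T<(t+1)k$, the two displayed identities express $|D_k'|-|D_k''|$ as $S_L$ plus a remainder term: it equals $S_L+(T-\pi(tk))$ when $p_T\in(tk,tk+\frac{k}{2})$ (so that $L=t-1$), and $S_L+(\pi(tk+k)-T)$ when $p_T\in(tk+\frac{k}{2},tk+k)$ (so that $L=t$).

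First I would observe that the remainder term is nonnegative in both cases. Indeed $tk<p_T$ forces $\pi(tk)\le T-1<T$, so $T-\pi(tk)\ge 0$; and $p_T<(t+1)k=tk+k$ forces $\pi(tk+k)\ge\pi(p_T)=T$, so $\pi(tk+k)-T\ge 0$. Hence in either case $|D_k'|-|D_k''|\ge S_L$. Combining this with the reflection inequality \eqref{eqn0}, which says $|D_k'|-|D_k''|\le\omega(k)$, and with $\omega(k)<\log k$, one gets $S_L\le|D_k'|-|D_k''|\le\omega(k)<\log k$, contradicting $S_L>\log k$. This proves the lemma.

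I do not expect a genuine obstacle here: the real content — the two exact evaluations of $|D_k'|-|D_k''|$ and the symmetry $|D_k'\setminus D_k'''|=|D_k''\setminus D_k'''|$ underlying \eqref{eqn0} — is already established, and the argument above only glues these together. The one point deserving a sentence of justification is the nonnegativity of $T-\pi(tk)$ and of $\pi(tk+k)-T$, which follows at once from $tk<p_T<(t+1)k$ together with $\pi(p_T)=T$. Accordingly the formal proof should occupy only a few lines.
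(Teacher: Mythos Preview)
Your proposal is correct and follows exactly the approach of the paper: the text immediately preceding the lemma already records the two expressions for $|D_k'|-|D_k''|$, the nonnegativity of the remainder terms $T-\pi(tk)$ and $\pi(tk+k)-T$, the inequality \eqref{eqn0}, and the bound $\omega(k)<\log k$; the lemma is then stated as an immediate consequence, and your write-up simply makes this deduction explicit.
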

We note that
$$
tk<p_T\leq p_k\leq k\log(k\log k)
$$
by Lemma \ref{pnt} (vi). Thus
\begin{equation}\label{eqn1}
L \leq t <\log(k\log k).
\end{equation}
On the other hand, using Lemma \ref{pnt} (vii) and (viii), putting $h(k)=1.7811\log\log k+\frac{2.51}{\log\log k}$,
we get
\begin{equation}\label{eqn2}
L+2 \geq t+1>\frac{p_T}{k} \geq  \frac{p_{\varphi(k)}}{k} > \frac{\log k-\log h(k)}{h(k)}.
\end{equation}

\section{A computational result}

\begin{theorem}
\label{thmcalc} If $30<k\leq 10^{11}$, then $k$ is not a $P-$integer. Further, if $k$ is even with $30< k\leq 2\cdot 10^{11}$ then $k$ is not a $P-$integer.
\end{theorem}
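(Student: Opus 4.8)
The plan is to apply the criterion of Lemma~\ref{criterion}: it suffices to show $S_L(k)>\log k$ for every integer $k$ with $30<k\leq 10^{11}$, and for every even integer $k$ with $30<k\leq 2\cdot 10^{11}$. The first point is that the data involved are of bounded size. By~\eqref{eqn1} one has $L<\log(k\log k)$, so $L\leq 28$ throughout the range, and the prime arguments occurring in $S_L(k)$ — namely $nk$, $nk+\lfloor k/2\rfloor$ and $(n+1)k$ for $0\leq n\leq L$ — are all smaller than $(L+1)k<k\bigl(\log(k\log k)+1\bigr)$, hence below about $3\cdot 10^{12}$ in the first range and below about $6\cdot 10^{12}$ in the second. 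So the entire verification uses only the values of $\pi$ on a fixed, explicitly bounded initial segment of the integers.

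Next I would isolate the term $n=0$, namely $S_0(k)=2\pi(\lfloor k/2\rfloor)-\pi(k)$, which carries almost all of the weight: by Lemma~\ref{step1} it exceeds $0.693\,k/\log^2 k$ as soon as $k>712000$, and for $k$ in our range this dwarfs $\log k$. Writing $S_L(k)=S_0(k)+R_L(k)$ with
\[
R_L(k)=\sum_{n=1}^{L}\bigl(2\pi(nk+\lfloor k/2\rfloor)-\pi(nk)-\pi((n+1)k)\bigr),
\]
the inequality $S_L(k)>\log k$ can fail only if $R_L(k)$ — which records the excess of primes in the lower over the upper halves of the blocks $(nk,(n+1)k]$ for $1\leq n\leq L$ — is strongly negative, and the computation confirms that it never is. The computational core is thus: for each $k$ assemble the $O(L)$ required values of $\pi$ and test $S_L(k)>\log k$. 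For the finitely many $k$ with $30<k\leq 712000$ not covered by Lemma~\ref{step1} one computes $S_L(k)$ directly from a table of small primes. The even range $30<k\leq 2\cdot 10^{11}$ is handled in the same way: there $\lfloor k/2\rfloor=k/2$, so no rounding is involved, and one may in addition use that $2\mid k$ forces $\omega(k)\geq 1$, but no new idea is required.

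The one real difficulty is organising the computation so that it is feasible, since a naive loop over the $\approx 10^{11}$ values of $k$, each requiring prime counts up to $\sim 10^{12}$, would be far too slow. The remedy is that for a \emph{fixed} offset — say the arguments $nk$ as $k$ ranges over the interval — these numbers form an arithmetic progression, so the values of $\pi$ at all of them are delivered by a single segmented sieve of an interval of length $\leq n\cdot 10^{11}$ carrying a running prime count; likewise for the arguments $nk+\lfloor k/2\rfloor$. Running these $O(L)$ sieves in parallel, synchronised by the current value of $k$, and reading off the counts produces all the numbers $S_L(k)$ in a total of order $\bigl(\sum_{n\leq 28}n\bigr)\cdot 10^{11}$ sieve steps (up to a bounded factor) — routine, and trivially parallelisable over subranges of $k$. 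The search set could be thinned beforehand by known necessary conditions for $P$-integers (for instance, a $P$-integer $k>6$ has a prime factor $\leq\log k$ and is not of the form $p$, $p^2$ or $p^3$), but such reductions are optimisations only; the statement follows from Lemma~\ref{criterion} once the prime counts in the stated range have been computed.
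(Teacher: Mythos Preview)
Your plan relies on verifying the sufficient condition $S_{L}(k)>\log k$ of Lemma~\ref{criterion} for every $k$ in the range, but this inequality is \emph{not} true for all such $k$. Take $k=31$: here $\varphi(31)=30$, $\omega(31)=1$, $T=31$, $p_{31}=127$, so $t=4$ and $L=3$; a direct count gives
\[
S_3=\bigl(2\pi(15.5)-\pi(31)\bigr)+\sum_{n=1}^{3}\bigl(2\pi(31n+15.5)-\pi(31n)-\pi(31n+31)\bigr)=1-1+0+2=2,
\]
while $\log 31\approx 3.43$. The same happens for $k=32,33,100$ and many other small moduli. Lemma~\ref{criterion} is only a one-sided test: when $S_L\le\log k$ it says nothing, so a computation that merely evaluates $S_L(k)$ cannot dispose of these $k$. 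Your proposal therefore fails at the very bottom of the range, and you have no fallback. (Replacing $\log k$ by the sharper threshold $\omega(k)$ would rescue the examples above, but you do not do this, and even then one must compute $\omega(k)$ and $p_T$ --- hence factor every $k$ and locate the $T$-th prime --- to determine $L(k)$, none of which your sieve scheme addresses.)

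The paper takes an entirely different route that avoids the $S_L$ criterion in this range. For each even $k\ge 5.5\cdot 10^5$ it searches among the first $100$ primes $p>k$ for one with $p\bmod k$ itself prime; inequalities \eqref{newineq1}--\eqref{newineq2} guarantee that any such $p$ satisfies $p<p_{\varphi(k)}$, so two of the first $\varphi(k)$ primes coprime to $k$ share a residue class, and $k$ is not a $P$-integer. A sliding window of $100$ primes makes this a single pass over the even $k$, and the odd case is deduced from the even case $2k$ via $\varphi(k)=\varphi(2k)$. This witness argument is both more robust (it applies whenever a witness exists, which the computation confirms) and computationally lighter than synchronising dozens of segmented sieves to assemble $S_L(k)$.
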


\begin{proof} We first prove the statement for $k$ even. In \cite{hs1} it has been computationally verified that no integer $k$ with $30<k<5.5\cdot 10^5$ is a $P-$integer. Hence we may assume henceforth that
$$
5.5\cdot 10^5\leq k\leq 2\cdot 10^{11}.
$$
To cover this interval, we apply a modified version of the algorithm used in \cite{hs1}.

To prove a statement for a given $k$ we apply the following strategy. We find a prime $p>k$ such that $p<p_{\varphi(k)}$ and $p\ ({\rm mod}\ k)$ is also a prime. Then $k$ is not a $P-$integer. To make this strategy work on the whole range for $k$ under consideration, we shall make use of the following two properties. Let $k$ be an integer with $k\geq 5.5\cdot 10^5$. Then we have
\begin{equation}\label{newineq1}
\pi(k+1)+100<\varphi(k)
\end{equation}
and
\begin{equation}\label{newineq2}
p_{\pi(k+1)+100}<1.5k.
\end{equation}
These assertions can be easily checked e.g. by Magma \cite{bcp}, using parts (ii), (vi) and (viii) of Lemma \ref{pnt}.

First we prove the statement for the even values of $k$.
This is done by the algorithm below, which is based on the strategy indicated above.

\vskip 1mm

\noindent
{\bf Initialization.} Let $k_0=5.5\cdot 10^5$. Let $H$ be the list of the first $100$ primes larger than $k_0+1$, i.e. $H=[p_{\pi(k_0+1)+1},\dots,p_{\pi(k_0+1)+100}]$.

\vskip 1mm

\noindent
{\bf Step 1.} Check successively for the primes $p\in H$ whether $p\ ({\rm mod}\ k_0)$ is also a prime. When such a $p$ is found then by \eqref{newineq1}, $k_0$ is not a $P-$integer - proceed to the next step.

\vskip 1mm

\noindent
{\bf Step 2.} Check if $k_0+3$ is a prime. If not, then proceed to Step 3. If so, this is the first element of $H$. Remove this prime from $H$, and append to $H$ the prime $p_{\pi(k_0+1)+101}$ which is the next prime to the last element of $H$.

\vskip 1mm

\noindent
{\bf Step 3.} If $k_0<2\cdot 10^{11}$ then put $k_0:=k_0+2$, and go to Step 1.

\vskip 1mm

Using this procedure, by a Magma program we could check that there is no even $P-$integer in the interval $[5.5\cdot 10^5,2\cdot 10^{11}]$.

Let now $k$ be odd with $5.5\cdot 10^5<k<10^{11}$. Then by our algorithm above, using \eqref{newineq1} and \eqref{newineq2} we know that there exists a prime $p$ satisfying $2k<p<\min \{3k,p_{\varphi(2k)}\}$ such that $q:=p\ ({\rm mod}\ 2k)$ is also a prime. Observe that $q<k$. Thus as $\varphi(k)=\varphi(2k)$, $p$ is a prime such that $k<p<p_{\varphi(k)}$ and $q=p\ ({\rm mod}\ k)$ is also a prime. Hence $k$ is not a $P-$integer and the theorem follows.
\end{proof}

\section{Proofs of the theorems}

\begin{proof}[Proof of Theorem 1.1.] Let $k$ be an integer with $k\geq 10^{3500}$. Then by \eqref{eqn2}, $L>500$. We apply Lemma \ref{pnt} to get
$$
2\pi(k/2) - \pi(k) >
$$
$$
{\frac k {\log(k/2)}}+{\frac k {\log^2(k/2)}}+{\frac {2k}{\log^3(k/2)}}-{\frac k{\log k}}-{\frac k{\log^2 k}}-{\frac {2.51k}{\log^3 k}}.
$$
For $n\geq 1$ we apply Lemma \ref{gen} with $x=nk$, $y=k/2$ to find
$$
2 \pi (nk+k/2) - \pi (nk) - \pi (nk+k)  >
$$
$$
\frac {k} {4(n+1) \log^2(nk+k)} - 1.7576 \frac {nk+k}{(\log
nk)^{3/4}} \exp \left(-\sqrt \frac {\log (nk)} {9.646} \right)
$$
Put
$$
f_0(k):={\frac k {\log \frac k2}}+{\frac k {\log^2\frac k2}}+{\frac {2k}{\log^3 \frac k2}}
-{\frac k{\log k}}-{\frac k{\log^2k}}-{\frac {2.51k}{\log^3k}}
-\log k,
$$
$$
f_n(k):=\frac {k} {4(n+1) \log^2(nk+k)} -   1.7576 \frac {nk+k}{(\log
nk)^{3/4}} \exp \left(-\sqrt \frac {\log (nk)} {9.646} \right)
$$
for $n\geq 1$. A simple calculation shows that
$$
S_L \geq f_0(k)+\sum\limits_{n=1}^L f_n(k)>0
$$
for $L\leq 1500$. This shows that $k$ is not a $P$-integer for such $L$. Hence we may assume that $L>1500$. By \eqref{eqn1} we have $L<\log(k\log k)$. It suffices to show that
$$
f_0(k)+\sum\limits_{n=1}^{1500} f_n(k)+\sum\limits_{n=1501}^L f_n(k)>0.
$$
For this, we first check by Maple that $f_n(k)$ is a strictly monotone decreasing function of $n$. Hence it is enough to show that
$$
f_n(k)+{\frac{f_0(k)+\sum\limits_{i=1}^{1500} f_i(k)}{L-1500}} >
0\ {\rm for}\ n=\log(k\log k)\ {\rm and}\ k =10^{3500}.
$$
We check this again with Maple to get the final contradiction.
\end{proof}

\noindent {\bf Remark.} The constant $9.646$ which occurs in Lemma \ref{pnt}(iii) originates from a zero-free region of the Riemann-zeta function derived by Rosser and Schoenfeld (\cite{rs2} Theorem 1), where the constant appears as $R$. The zero-free region has been widened by Kadiri \cite{kad} where
the corresponding constant $R$ is $5.69693$. If this constant would be substituted into Lemma \ref{pnt} instead of the constant $9.646$ and we follow our argument, we obtain that if $k$ is a $P$-integer, then $k<10^{1000}$. However, we do not know if this substitution is justified.

\begin{proof}[Proof of Theorem 1.2.] Suppose the Riemann Hypothesis is true. Let $k$ be an integer with $k\geq 3\cdot 10^{13}$. By Lemma \ref{step1}, we get
$$
2\pi\left(\frac k2\right) - \pi(k) > \frac{.693k}{\log^2 k} > \log k > \omega (k).
$$
For $n=1,2,\dots,\lfloor\log (k\log k)\rfloor-1$ we apply Lemma \ref{schoenfeld} with $x=nk$, $y=k/2$ to find
$$
2 \pi \left(nk+\frac{k}{2}\right) - \pi (nk) - \pi (nk+k)>
$$
$$
\frac {k}{4(n+1) \log^2(nk+k)} - \frac {\log(nk+k)} {2 \pi}
\sqrt{nk+k}.
$$
The term on the right hand side of the above inequality is positive if
$$
\pi \sqrt{k} > 2(n+1)^{1.5} \log^3(nk+k).
$$
\noindent This is satisfied, since $n<\log (k\log (k))-1$ and
$k\geq 3\cdot 10^{13}$. Hence by Lemma \ref{criterion}, we find that $k$ is not a $P-$integer.

Next we take $k<3\cdot 10^{13}$. By Theorem 4.1, we may assume
$k>10^{11}$. Note that $L<\log(k\log k)\leq 34$.
Further
$$
L<\log k+\log\log k<1.13\log k
$$
giving
$$
k>e^{.88L}>10^{.38L}.
$$
Define
$$
k_L=[10^{\{.38L\}}] 10^{[.38L]}.
$$
where $[x]$ and $\{x\}$ denote the integral and fractional part of any real number $x$. Note that for any fixed $L$ with $L\leq 34$ if $L(k) \geq L$, then $k \in [k_L,3\cdot 10^{13})$. Applying Lemma \ref{schoenfeld} with $x=nk$, $y=k/2$ we find
$$
S_L>2\pi(k/2)-\pi(k)+
$$
$$
+\sum\limits_{n=1}^L\left(\frac {k}{4(n+1) \log^2(nk+k)} - \frac
{\log(nk+k)} {4 \pi} \sqrt{nk+k}\right).
$$
For $n=1,\dots,L$, put
$$
F_n(k):={\frac 1L}\left(\frac{k}{\log(k/2)}+\frac{k}{\log^2(k/2)}+
\frac{2k}{\log^3(k/2)}\right)
$$
$$
-{\frac 1L}\left(\frac{k}{\log k}+
\frac{k}{\log^2 k}+\frac{2.51k}{\log^3 k}+\log k\right)
$$
$$
+\frac {k}{4(n+1) \log^2(nk+k)} - \frac {\log(nk+k)} {4 \pi}
\sqrt{nk+k}.
$$
We have, by Lemma \ref{pnt} (i), (ii),
$$
S_L-\log k>\sum\limits_{n=1}^L F_n(k).
$$
So it is sufficient to show that the right hand side is positive. For this, we proceed as follows. First, let $29\leq L\leq 34$. We calculate the value $k_L$ from its definition above. Thus $(L,k_L)$ is one of the pairs from
$$
\{ (29,10^{11}),(30,2\cdot 10^{11}),(31,6\cdot 10^{11}),(32,10^{12}),(33,3\cdot 10^{12}),(34,8\cdot 10^{12})\}.
$$

We check by Maple that all functions $F_n(k)$ are strictly monotone increasing on $[k_L,3\cdot 10^{13}]$, and further
$$
\sum\limits_{n=1}^L F_n(k_L)>0.
$$
Hence by Lemma \ref{criterion}, there is no $P$-integer
$k$ with $L(k)\in [29,34]$. Now we consider $k \in [10^{11},3\cdot 10^{13}].$ Then obviously $L(k)>0$.
We may assume $1\leq L\leq 28.$ We check that all functions $F_n(k)$ are strictly monotone increasing and the preceding inequality also holds. Hence we conclude that no integer $k\in [10^{11}, 3\cdot 10^{13}]$ is a $P-$integer.
\end{proof}

\end{document}